\newcommand{\fp}{{\mathfrak p}}
\newcommand{\fP}{{\mathfrak P}}
\newcommand{\Cl}{{\operatorname{Cl}}}
\newcommand{\Gal}{{\operatorname{Gal}}}
\newcommand{\Z}{{\mathbb Z}}
\newcommand{\Q}{{\mathbb Q}}
\newcommand{\bQ}{\overline{\mathbb Q}}
\newcommand{\cO}{{\mathcal O}}
\newcommand{\eps}{\varepsilon}
\newcommand{\lra}{\longrightarrow}
\newcommand{\la}{\langle}
\newcommand{\ra}{\rangle}
\newcommand{\bK}{\overline{K}}
\newcommand{\bL}{\overline{L}}
\newtheorem{thm}{Theorem}[section]
\newtheorem{prop}[thm]{Proposition}
\newtheorem{lem}[thm]{Lemma}
\numberwithin{equation}{section}
\title{Harbingers of Artin's Reciprocity Law. \\
        II. Irreducibility of Cyclotomic Polynomials}
\author{F. Lemmermeyer}
\email{hb3@ix.urz.uni-heidelberg.de}
\address{M\"orikeweg 1, 73489 Jagstzell, Germany}
\begin{document}
\maketitle
\markboth{Harbingers of Artin's Reciprocity Law}
         {\today \hfil Franz Lemmermeyer}
\begin{center} \today \end{center}
\bigskip

In \cite{LH1}, we have presented the history of auxiliary
primes from Legendre's proof of the quadratic reciprocity 
law up to Artin's reciprocity law. We have also seen that
the proof of Artin's reciprocity law consists of several
steps, the first of which is the verification of the 
reciprocity law for cyclotomic extensions of $\Q$. In this
article we will show that this step can be identified with
one of Dedekind's proofs of the irreducibility of the
cyclotomic polynomial.

\section{Irreducibility of Cyclotomic Polynomials}

Let $\zeta$ be a primitive $m$-th root of unity, and let 
$\mu_m = \la \zeta \ra$ denote the group of $m$-th roots of unity. 
The polynomial $\Phi_m(X) = \prod_{(r,m) = 1} (X - \zeta^r)$ 
is called the $m$-th cyclotomic polynomial. Since every automorphism
of $\bQ/\Q$ maps a primitive $m$-th root of unity to another
primitive $m$-th root of unity, the coefficients of $\Phi_m(X)$ 
must be rational numbers, and actually are integers since $\zeta$
is an algebraic integer. 

The first proofs of the irreducibility of the cyclotomic polynomial
were obtained by Gauss, Eisenstein, Arndt, and Kronecker. Dedekind
gave three proofs: \cite{Ded1} was published in 1857, his second proof
was contained in his review of Bachmann's \cite{BKT}, and the last one
\cite{Ded2} in 1894. If Dedekind published a new proof of a classical
result then he did so because he thought that the new proof was 
superior to the known proofs. In fact, his third proof will turn
out to be a small first step in Artin's and Tate's proof of Artin's
reciprocity law. In this section we will go through Dedekind's 
third proof; we have updated the form, but not the content, of 
Dedekind's results.

Now fix a primitive $m$-th root of unity $\zeta$, and assume that
$\Phi_m(X)$ is reducible over $\Q$. Then $\zeta$ is a root of one 
of the factors of $\Phi_m(X)$, and if we let $f$ denote the minimal
polynomial of $\zeta$ we can write $\Phi_m(X) = f(X)g(X)$ for
polynomials $f, g \in \Q[X]$ with $f(\zeta) = 0$.

Let $L = \Q(\mu_m)$ denote the splitting field of $\Phi_m(X)$ and 
$K = \Q(\zeta)$ the splitting field of $f$; observe that $L$ contains 
all $m$-th roots of unity, and $K$ only those conjugate to $\zeta$. 
In particular, we have $K \subseteq L$, $(K:\Q) = \deg f$ and 
$(L:\Q) = \phi(m)$. 

Thus $\Phi_m(X)$ is irreducible if and only if $(K:\Q) \ge \phi(m)$.
The following lemma collects other ways of expressing this fact:

\begin{lem}\label{Lirr}
The following assertions are equivalent:
\begin{enumerate}
\item $\Phi_m(X)$ is irreducible over $\Q$;
\item $(K:\Q) \ge \phi(m)$ for $K = \Q(\zeta)$;
\item $K = L$, where $L$ is the splitting field of $\Phi_m(X)$;
\item for every $r$ coprime to $m$, the element $\zeta^r$ is a 
      conjugate of $\zeta$.
\item We have $\Gal(\Q(\zeta)/\Q) = \{\sigma_r: \gcd(r.m) = 1\}$,
      where $\sigma_r(\zeta) = \zeta^r$. 
\end{enumerate}
\end{lem}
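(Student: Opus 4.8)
The plan is to establish the cycle of implications $(1) \Rightarrow (2) \Rightarrow (3) \Rightarrow (4) \Rightarrow (5) \Rightarrow (1)$, using only the facts already assembled above: $f$ is a monic divisor of $\Phi_m$ with $f(\zeta) = 0$; $K \subseteq L$; $(K:\Q) = \deg f$; and $(L:\Q) = \phi(m) = \deg \Phi_m$. At the outset I would record one structural remark: since the roots of $f$ lie among the primitive $m$-th roots of unity, and these are all powers of $\zeta$, the field $K = \Q(\zeta)$ is the splitting field of the separable polynomial $f$. Hence $K/\Q$ is Galois, $|\Gal(K/\Q)| = (K:\Q) = \deg f$, and $\sigma \mapsto \sigma(\zeta)$ is a bijection from $\Gal(K/\Q)$ onto the set of roots of $f$.

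With this in hand the individual implications are short. For $(1) \Rightarrow (2)$: if $\Phi_m$ is irreducible then $f = \Phi_m$, so $(K:\Q) = \phi(m)$. For $(2) \Rightarrow (3)$: since $K \subseteq L$ with $(L:\Q) = \phi(m)$, the hypothesis $(K:\Q) \ge \phi(m)$ forces $(K:\Q) = (L:\Q)$ and hence $K = L$. For $(3) \Rightarrow (4)$: $K = L$ gives $\deg f = (K:\Q) = \phi(m) = \deg \Phi_m$, and as $f \mid \Phi_m$ with both monic we get $f = \Phi_m$; therefore every $\zeta^r$ with $(r,m) = 1$, being a root of $\Phi_m = f$, is a conjugate of $\zeta$. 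For $(4) \Rightarrow (5)$: by $(4)$ each such $\zeta^r$ is a root of $f$, so $\zeta \mapsto \zeta^r$ extends to some $\sigma_r \in \Gal(K/\Q)$ (note $\Q(\zeta^r) = K$ since $r$ is invertible mod $m$); distinct residues $r \bmod m$ give distinct values $\zeta^r$, so by the bijection above these $\sigma_r$ are pairwise distinct and exhaust $\Gal(K/\Q)$. For $(5) \Rightarrow (1)$: the displayed equality makes every $\sigma_r$ a genuine automorphism, so $(K:\Q) = |\Gal(K/\Q)| = \phi(m)$, and then the minimal polynomial $f$ of $\zeta$ has degree $\deg \Phi_m$ while dividing $\Phi_m$, forcing $f = \Phi_m$.

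I do not expect a genuine obstacle here, as the lemma merely repackages the inequality $\deg f \le \phi(m)$, with equality equivalent to irreducibility. The one place deserving a careful word is $(3) \Rightarrow (4)$: $K = L$ only tells us directly that $\zeta^r \in K$, which by itself does not make $\zeta^r$ a conjugate of $\zeta$; the point is that a monic divisor of $\Phi_m$ of full degree must be $\Phi_m$ itself, and this is what upgrades membership in $K$ to conjugacy. A symmetric caveat applies when reading $(5)$: the equality of sets there already presupposes that each $\sigma_r$ is well defined, which is precisely condition $(4)$.
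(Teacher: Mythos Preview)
The paper states this lemma without proof; immediately after it the text turns to Lemmas~\ref{Lin} and~\ref{LFr}. Your cycle $(1)\Rightarrow(2)\Rightarrow(3)\Rightarrow(4)\Rightarrow(5)\Rightarrow(1)$ is the natural way to fill the gap, and each individual step is argued correctly from the facts the paper records just before the lemma.

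One caveat about those facts, however. Since $\mu_m=\langle\zeta\rangle$, the splitting field $L=\Q(\mu_m)$ coincides with $\Q(\zeta)=K$ by definition, so condition~(3) holds automatically and the assertion $(L:\Q)=\phi(m)$ made just before the lemma is itself equivalent to~(1). Your step $(3)\Rightarrow(4)$ relies precisely on that assertion, so the chain as written is circular; this is a defect of the paper's setup rather than of your reasoning. The repair is easy: bypass~(3) and prove $(2)\Rightarrow(4)$ directly from $\deg f=(K:\Q)$ and $f\mid\Phi_m$ alone, since $(K:\Q)\ge\phi(m)$ then forces $\deg f=\phi(m)=\deg\Phi_m$, hence $f=\Phi_m$, and every $\zeta^r$ with $\gcd(r,m)=1$ is a conjugate of~$\zeta$.
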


In both of Dedekind's proofs of the irreducibility of $\Phi_m(X)$ it is
shown that (4) holds. The two main ingredients of Dedekind's third
proof are the following two lemmas:

\begin{lem}\label{Lin}
Let $F$ be a number field containing $\mu_m$, and let $\fp$ be
a prime ideal in $\cO_F$ coprime to $m$. Then the natural 
projection $\pi:\mu_m \lra (\cO_F/\fp)^\times$ is injective.
\end{lem}

The proof of Lemma \ref{Lin} will be given below; Dedekind's 
formulation of this result was the following:
\begin{quote}
Let $\fp$ be a prime ideal in a number field $\Omega$, let $p$
be the prime number divisible by $\fp$, and set $N \fp = p^f$,
where $f$ is the inertia degree of $\fp$. If $\alpha$ is a primitive
$m$-th root of unity in $\Omega$, and if we set $m = m'p'$, where
$p'$ is the highest power of $p$ dividing $m$, then $\alpha$
belongs to the exponent $m' \bmod \fp$.
\end{quote}
In the case we are interested in we have $p \nmid m$, hence $m' = m$;
the exponent to which a root of unity $\zeta$ belongs mod $\fp$
is the smallest positive integer $e$ such that 
$\zeta^e \equiv 1 \bmod \fp$. Thus Dedekind claims that the order
of $\zeta$ and that of $\zeta \bmod \fp$ coincide, which is exactly
the content of Lemma \ref{Lin} above.

The next lemma provides us with the existence of the Frobenius 
automorphism\footnote{In a letter to Dedekind, probably written 
on the first days of June 1882, Frobenius asked whether Dedekind 
knew a certain result concerning the Galois group of an extension 
of number fields $K/\Q$ and the decomposition of prime ideals in $K$. 
Dedekind replied on June 8 that he knew this result, and Frobenius 
gave his proof of the existence of the Frobenius substitution in his 
answer. Dedekind explained his own proof in his letter dated June 14, 
and this version was used by Frobenius in his publication \cite{Fro}.
For the relevant parts of these letters see \cite{DedF}.}, 
which is a well known result from the Galois theory of normal 
extensions of number fields:

\begin{lem}\label{LFr}
Let $L/K$ be a normal extension of number fields, $\fp$ a nonzero 
prime ideal in $\cO_K$ that does not ramify in $L/K$, and $\fP$ a
prime ideal in $\cO_L$ above $\fp$. The residue class fields
$\bK = \cO_K/\fp$ and $\bL = \cO_L/\fP$ are finite fields, and
the natural projection
$$ Z(\fP|\fp) \lra \Gal(\bL/\bK) $$
from the decomposition group $Z(\fP|\fp)$ to the Galois group of 
the (cyclic) extension $\bL/\bK$ of finite fields is an isomorphism. 
The unique preimage of the Frobenius automorphism of $\bK$ is
called the Frobenius automorphism of $\fp$. This Frobenius 
automorphism $\sigma$ is characterized by the property that
$\sigma(\alpha) \equiv \alpha^p \bmod \fp$ for every 
$\alpha \in \cO_K$.
\end{lem}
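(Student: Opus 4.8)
The plan is to construct the ``reduction'' homomorphism $r\colon Z(\fP|\fp)\to\Gal(\bL/\bK)$ by hand, show it is onto via a Chinese Remainder / characteristic‑polynomial trick, and finish by comparing cardinalities. The preliminaries are quick: $\fP$ lies over a rational prime $p$, so $\bK$ and $\bL$ are finite, and a finite extension of finite fields is automatically Galois with cyclic group generated by the $q$-th power map, where $q=|\bK|=N\fp$; put $f=[\bL:\bK]$. As $L$ has characteristic $0$, $L/K$ is Galois; write $G=\Gal(L/K)$. Each $\sigma\in G$ fixes $\cO_K$, hence $\fp$, so $G$ permutes the primes $\fP=\fP_1,\dots,\fP_g$ of $\cO_L$ above $\fp$, and a standard argument with norms and the Chinese Remainder Theorem shows this action is transitive --- this is the point at which normality is used. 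An element $\sigma$ of the decomposition group $Z(\fP|\fp)=\{\sigma\in G:\sigma\fP=\fP\}$ then induces a ring automorphism $\bar\sigma$ of $\bL=\cO_L/\fP$ that is the identity on $\bK=\cO_K/\fp$, and this is the homomorphism $r$.

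The heart of the matter is surjectivity of $r$. Choose $\bar\alpha\in\bL$ generating $\bL$ over $\bK$ --- for instance a generator of the cyclic group $\bL^\times$ --- and lift it, via the Chinese Remainder Theorem, to $\alpha\in\cO_L$ with $\alpha\equiv\bar\alpha\pmod{\fP}$ and $\alpha\equiv0\pmod{\fP_i}$ for $i=2,\dots,g$. The polynomial $h(X)=\prod_{\sigma\in G}(X-\sigma\alpha)$ has coefficients in $\cO_L$ that are fixed by $G$, so $h\in\cO_K[X]$; reducing $h$ modulo $\fP$ gives $\bar h(X)=\prod_{\sigma\in G}(X-\overline{\sigma\alpha})$, a polynomial with coefficients in $\bK$, and since $\bar h(\bar\alpha)=\overline{h(\alpha)}=0$ the minimal polynomial of $\bar\alpha$ over $\bK$ divides $\bar h$. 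In particular the conjugate $\bar\alpha^q$ of $\bar\alpha$ is a root of $\bar h$, so $\bar\alpha^q=\overline{\sigma\alpha}$ for some $\sigma\in G$. If $\sigma\notin Z(\fP|\fp)$ then $\sigma^{-1}\fP$ is one of $\fP_2,\dots,\fP_g$, hence $\alpha\equiv0\pmod{\sigma^{-1}\fP}$, i.e.\ $\overline{\sigma\alpha}=0$, which forces $\bar\alpha^q=0$ and so $\bar\alpha=0$ --- impossible. Therefore $\sigma\in Z(\fP|\fp)$, and $r(\sigma)=\bar\sigma$ sends the generator $\bar\alpha$ to $\bar\alpha^q$, so $r(\sigma)$ is the $q$-th power Frobenius; as that generates $\Gal(\bL/\bK)$, the map $r$ is onto.

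It remains to check that $r$ is injective, and this is the only place the hypothesis that $\fp$ is \emph{unramified} enters. Transitivity of $G$ on $\{\fP_1,\dots,\fP_g\}$ gives $|Z(\fP|\fp)|=|G|/g=(L:K)/g$, and for each $i$ any $\tau\in G$ with $\tau\fP=\fP_i$ induces a $\bK$-isomorphism $\cO_L/\fP\to\cO_L/\fP_i$, so all inertia degrees agree, $f_i=f$; the fundamental identity $\sum_i e_if_i=(L:K)$ together with $e_i=1$ then gives $(L:K)=gf$, whence $|Z(\fP|\fp)|=f=|\Gal(\bL/\bK)|$. A surjection of finite groups of equal order is an isomorphism. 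Finally, the Frobenius automorphism $\sigma$, defined as the preimage of the $q$-th power map, satisfies $\sigma(\alpha)\equiv\alpha^q\pmod{\fP}$ for all $\alpha\in\cO_L$ directly from the construction of $r$; conversely any $\tau\in G$ with $\tau(\alpha)\equiv\alpha^q\pmod{\fP}$ for all $\alpha\in\cO_L$ maps $\fP$ into itself, hence lies in $Z(\fP|\fp)$ and reduces to the $q$-th power map, so $\tau=\sigma$ by injectivity --- this is the asserted characterization. The one genuinely delicate step is the surjectivity of $r$, i.e.\ producing a field automorphism of $\bL/\bK$ that actually lifts to $G$; everything else is bookkeeping with the decomposition‑theory facts that the lemma itself advertises as well known.
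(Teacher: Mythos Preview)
Your argument is correct and is the standard textbook proof: the reduction homomorphism, the CRT lift of a primitive element of $\bL$, the characteristic-polynomial trick to hit the Frobenius, and then the orbit--stabilizer count using $e=1$ to get injectivity. There is, however, nothing in the paper to compare it against: the author introduces Lemma~\ref{LFr} as ``a well known result from the Galois theory of normal extensions of number fields'' and gives no proof at all, passing straight to Proposition~\ref{PD1}. So you have supplied precisely what the paper deliberately leaves out.

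One incidental remark: the characterization as printed in the paper, $\sigma(\alpha)\equiv\alpha^{p}\bmod\fp$ for $\alpha\in\cO_K$, is visibly garbled (for $\alpha\in\cO_K$ one has $\sigma(\alpha)=\alpha$, so the congruence carries no information). Your formulation, $\sigma(\alpha)\equiv\alpha^{q}\bmod\fP$ for all $\alpha\in\cO_L$ with $q=N\fp$, is the intended statement, and your uniqueness argument for it is fine: if $\tau$ satisfies that congruence then $\tau\fP\subseteq\fP$, hence $\tau\in Z(\fP|\fp)$, and $r(\tau)$ is the $q$-th power map, so $\tau=\sigma$ by the injectivity you established.
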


Now we claim

\begin{prop}\label{PD1}
The natural homomorphism $\phi:\Gal(K/\Q) \lra (\Z/m\Z)^\times$ 
sending $\sigma \in \Gal(K/\Q)$ to the residue class of 
$r \bmod m$ determined by $\sigma(\zeta) = \zeta^r$ is surjective.
\end{prop}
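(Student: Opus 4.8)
The plan is to exhibit, for every prime $p$ with $p \nmid m$, an element of $\Gal(K/\Q)$ carrying $\zeta$ to $\zeta^p$; since the image of $\phi$ is a subgroup of $(\Z/m\Z)^\times$ and every class in $(\Z/m\Z)^\times$ is represented by a positive integer coprime to $m$, hence by a product of primes not dividing $m$, this forces $\phi$ to be onto. The automorphisms are supplied by the Frobenius of Lemma \ref{LFr}, and the one subtlety is that we may \emph{not} yet know that $K$ contains all of $\mu_m$ --- that is essentially the assertion being proved --- so the Frobenius must first be found in $L = \Q(\mu_m)$ and only afterwards restricted down to $K$.

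Fix a prime $p \nmid m$. Then $p$ is unramified in $L/\Q$, and for a prime $\fP$ of $\cO_L$ above $p$, Lemma \ref{LFr} produces a Frobenius automorphism $\sigma \in \Gal(L/\Q)$ satisfying $\sigma(\alpha) \equiv \alpha^p \bmod \fP$ for all $\alpha \in \cO_L$; in particular $\sigma(\zeta) \equiv \zeta^p \bmod \fP$. Being a field automorphism, $\sigma$ maps the cyclic group $\mu_m \subseteq L^\times$ onto itself, so $\sigma(\zeta) = \zeta^s$ for some $s$ coprime to $m$. As $\zeta$ is a unit in $\cO_L$, the congruence $\zeta^s \equiv \zeta^p \bmod \fP$ yields $\zeta^{s-p} \equiv 1 \bmod \fP$; and since $L \supseteq \mu_m$ while $\fP$ is coprime to $m$, Lemma \ref{Lin} makes the projection $\mu_m \to (\cO_L/\fP)^\times$ injective, whence $\zeta^{s-p} = 1$, i.e. $\sigma(\zeta) = \zeta^p$.

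Now $K = \Q(\zeta) \subseteq L$ is normal over $\Q$, being a subfield of the abelian extension $\Q(\mu_m)/\Q$. Since $\sigma(\zeta) = \zeta^p$ again lies in $K$, the restriction $\sigma|_K$ belongs to $\Gal(K/\Q)$ and $\phi(\sigma|_K) = p \bmod m$. Thus $p \bmod m$ lies in the image of $\phi$ for every prime $p \nmid m$; given an arbitrary class $a \in (\Z/m\Z)^\times$, choose its representative in $\{1,\dots,m\}$ (automatically coprime to $m$) and factor it into primes, none of which divides $m$, to conclude $a \in \operatorname{im}\phi$. Hence $\phi$ is surjective.

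The main obstacle is exactly the point flagged above: one cannot invoke Lemma \ref{Lin} directly inside $K$, so the argument is forced to run through the larger field $L$ and then descend, and checking that the descent is legitimate --- that $\sigma|_K$ really is an automorphism of $K$ --- is where the normality of $K/\Q$ enters. Everything else is bookkeeping; note that, in view of Lemma \ref{Lirr}, Proposition \ref{PD1} is just a restatement of the irreducibility of $\Phi_m(X)$.
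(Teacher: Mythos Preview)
Your argument is correct and follows the paper's route: reduce to primes $p\nmid m$, invoke the Frobenius at $p$, and use Lemma~\ref{Lin} to turn the congruence $\sigma(\zeta)\equiv\zeta^p$ into the equality $\sigma(\zeta)=\zeta^p$.

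One remark on your ``subtlety'': the detour through $L$ is unnecessary, because $K=\Q(\zeta)$ already contains $\mu_m$ trivially --- the powers $1,\zeta,\dots,\zeta^{m-1}$ of a \emph{primitive} $m$-th root of unity are precisely all the $m$-th roots of unity, so $\mu_m=\la\zeta\ra\subseteq K$ and in fact $K=L$ as fields from the outset. (The paper's own phrase that $K$ contains ``only those conjugate to $\zeta$'' is misleading on this point.) What is genuinely unknown before the proof is the \emph{degree} $(K:\Q)$, not the set-theoretic containment $\mu_m\subseteq K$. Accordingly the paper takes the Frobenius directly in $\Gal(K/\Q)$ and applies Lemma~\ref{Lin} with $F=K$, with no restriction step needed.
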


Since $\Gal(K/\Q)$ has order $(K:\Q)$ (recall that $K$ is the splitting
field of the factor $f$ of $\Phi_m$, hence normal), this implies
that $\Phi_m(X)$ is irreducible over $\Q$.

\begin{proof}[Proof of Prop. \ref{PD1}]
Since the coprime residue classes modulo $m$ are generated by 
classes represented by primes $p$ not dividing $m$, it is more than
enough to show that the residue classes generated by these $p$
are in the image (this reduction can already be found in 
Dedekind's first proof in \cite{Ded1}, as well as in van der 
Waerden's Algebra).

Let $p$ denote a prime number coprime to $m$. We have to 
show that there is a $\sigma \in \Gal(K/\Q)$ with 
$\sigma(\zeta) = \zeta^p$. Let $\sigma$ be the Frobenius automorphism 
for $p$; then $\sigma(\alpha) \equiv \alpha^p \bmod \fp$. Applying 
this to $\zeta$ we find $\sigma(\zeta) \equiv \zeta^p \bmod \fp$. 
Since $\sigma(\zeta)$ must be a root of $\Phi_m(X)$, we can write 
$\sigma(\zeta) = \zeta^s$ for some integer $s$ with $1 \le s < m$; 
then $\zeta^s \equiv \zeta^p \bmod \fp$, and Lemma \ref{Lin} implies 
that $s = p$, that is, $\sigma(\zeta) = \zeta^p$.
\end{proof}

Observe that we did not assume the existence of any primes lying in 
certain residue classes modulo $m$, but rather showed that if there
is a prime in some residue class, then it is the image of its
Frobenius automorphism. 

For proving Lemma \ref{Lin}, Dedekind uses the prime ideal 
factorization of the element $1-\zeta$ in $\Q(\mu_m)$:

\begin{lem}\label{LD1}
In the splitting field $\Q(\mu_m)$ of $\Phi_m(X)$, we have 
\begin{enumerate}
\item $\zeta - 1 = 0$ if $m = 1$.
\item $(\zeta-1)^{\phi(m)} = \eps p$ if $m$ is a power of $p$,
      where $\eps$ is a unit; moreover, if $\gcd(r,p) = \gcd(s,p) = 1$,
      then $\frac{\zeta^r-1}{\zeta^s-1}$ is a unit.  
\item $\zeta - 1 = \eps$ if $m$ is divisible by at least two
      distinct primes.
\end{enumerate} 
\end{lem}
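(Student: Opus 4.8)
\smallskip
\noindent\emph{Plan of proof.}
The plan is to deduce all three assertions from one fact about cyclotomic units: for $m>1$ and any $r,s$ coprime to $m$, the quotient $\frac{1-\zeta^r}{1-\zeta^s}$ is a unit in $\cO_L$, where $L=\Q(\mu_m)$. I would prove this by two uses of the identity $(1-\zeta)(1+\zeta+\dots+\zeta^{r-1})=1-\zeta^r$: it shows that $\frac{1-\zeta^r}{1-\zeta}=1+\zeta+\dots+\zeta^{r-1}\in\Z[\zeta]$, and, choosing $r'\ge 1$ with $rr'\equiv 1\pmod m$ and running the identity with $\zeta$ replaced by the order-$m$ element $\zeta^r$, it also shows $\frac{1-\zeta}{1-\zeta^r}=1+\zeta^r+\dots+(\zeta^r)^{r'-1}\in\Z[\zeta]$. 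Hence $1-\zeta^r$ is a unit multiple of $1-\zeta$, and the quotient of two such is a unit. Assertion (1) is trivial, so from now on $m>1$.

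I would next record that $\Phi_m(1)=\prod_{\gcd(r,m)=1}(1-\zeta^r)$ --- this is just the defining product of $\Phi_m$ evaluated at $X=1$ --- so $\Phi_m(1)$ is a product of $\phi(m)$ algebraic integers, each a unit times $1-\zeta$. For (2), where $m=p^k$, the equivalence $\gcd(r,p)=1\iff\gcd(r,p^k)=1$ makes the ``moreover'' clause an immediate instance of the unit fact; and since $\Phi_{p^k}(X)=\Phi_p(X^{p^{k-1}})$ and $\Phi_p(X)=1+X+\dots+X^{p-1}$, we get $\Phi_{p^k}(1)=p$. The displayed identity then reads $p=\eta\,(1-\zeta)^{\phi(m)}$ for some unit $\eta\in\cO_L$, whence $(\zeta-1)^{\phi(m)}=\eps p$ with $\eps=(-1)^{\phi(m)}\eta^{-1}$, a unit.

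For (3) I would use $X^m-1=\prod_{d\mid m}\Phi_d(X)$ (obtained by sorting the $m$-th roots of unity by their order): dividing by $X-1$ and setting $X=1$ gives $\prod_{1<d\mid m}\Phi_d(1)=m$. Splitting off the prime-power divisors and invoking $\Phi_{p^j}(1)=p$, for each $p\mid m$ the divisors $p,p^2,\dots,p^{v_p(m)}$ contribute $p^{v_p(m)}$, so the prime-power divisors already account for $\prod_{p\mid m}p^{v_p(m)}=m$; this forces the product of the remaining $\Phi_d(1)$, taken over the non-prime-power divisors $d\mid m$, to equal $1$. As these $\Phi_d(1)$ are rational integers, each equals $\pm1$, and since $m$ is not a prime power, $\Phi_m(1)=\pm1$. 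Then $\prod_{\gcd(r,m)=1}(1-\zeta^r)=\pm1$ displays the algebraic integer $1-\zeta$ as a divisor of a unit in $\cO_L$, so $1-\zeta$ is itself a unit, which is (3).

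Once the cyclotomic-unit fact is established everything is formal; the one place I expect to need a little care is the divisor bookkeeping in (3) --- arranging matters so that the prime-power divisors of $m$ exactly absorb the factor $m$ in $\prod_{1<d\mid m}\Phi_d(1)=m$ --- which in turn rests on the elementary evaluation $\Phi_{p^j}(1)=p$.
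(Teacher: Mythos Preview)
Your argument is correct. Parts (1) and (2), including the cyclotomic-unit fact, match the paper's proof essentially verbatim: both use $rr'\equiv 1\pmod m$ to show $\frac{1-\zeta^r}{1-\zeta}$ and its reciprocal are integral, and both evaluate $\Phi_{p^k}(1)=p$ (the paper via $\Phi_m(X)=\frac{X^m-1}{X^{m/p}-1}$, you via $\Phi_{p^k}(X)=\Phi_p(X^{p^{k-1}})$, which is the same identity).

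For part (3) you take a genuinely different route. The paper argues directly by divisibility: writing $m=pqn$ with $p,q$ distinct primes, $\zeta-1$ divides $\zeta^{qn}-1=\zeta_p-1$ and $\zeta^{pn}-1=\zeta_q-1$; by part (2) these divide $p$ and $q$ respectively, so $\zeta-1$ divides $\gcd(p,q)=1$ and is a unit. You instead compute $\Phi_m(1)$ globally: from $\prod_{1<d\mid m}\Phi_d(1)=m$ and $\Phi_{p^j}(1)=p$, the prime-power divisors already exhaust the factor $m$, forcing $\Phi_d(1)=\pm1$ for every non-prime-power $d\mid m$; hence $\Phi_m(1)=\prod(1-\zeta^r)=\pm1$ exhibits $1-\zeta$ as a divisor of a unit. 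Your approach yields the slightly stronger byproduct that $\Phi_d(1)=\pm1$ for \emph{all} non-prime-power $d$ simultaneously, at the cost of the bookkeeping you flagged; the paper's approach is shorter and more self-contained, needing only two well-chosen powers of $\zeta$ rather than the full divisor lattice of $m$.
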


These are simple properties of cyclotomic fields whose proofs
can be found in most textbooks on algebraic number theory.
In order to convince ourselves that Dedekind is not using
the irreducibility of $\Phi_m(X)$ along the way, let us derive
these results here:

\begin{proof}[Proof of Lemma \ref{LD1}]
Property (1) is trivial. For a proof of (2), let $r$ run through
all $\phi(m)$ coprime residue classes and set $rr' \equiv 1 \bmod m$.
The elements
$$ \frac{\zeta^r-1}{\zeta-1} \quad \text{and} \quad  
   \frac{\zeta-1}{\zeta^r-1} = \frac{\zeta^{rr'}-1}{\zeta^r-1} $$
are clearly integral, hence are units in the splitting field $L$ of 
$\Phi_m(X)$. Moreover, we have 
$$ \Phi_m(X) = \frac{X^m-1}{X^{m/p}-1} = \prod (X - \zeta^r), $$
where the product is over all $1 \le r < m = p^n$ with $\gcd(r,m) = 1$.
Writing 
$X^m-1 = (X^{\frac mp}-1)(X^{m-\frac mp} + \ldots + X^{\frac mp} + 1)$
and plugging in $X = \zeta$ then gives $p = \prod (1 - \zeta^r)$,
hence $(\zeta-1)^{\phi(m)} = p\eps$ for some unit $\eps \in \cO_L$.

Next, since both $\frac{\zeta^r-1}{\zeta-1}$ and 
$\frac{\zeta^s-1}{\zeta-1}$ are units whenever $r$ and $s$ are integers
coprime to $m$, so is their quotient $\frac{\zeta^r-1}{\zeta^s-1}$.

If finally $m = pqn$ is divisible by two distinct primes $p$ and $q$,
then $\zeta-1$ is a common divisor of $\zeta^{qn}-1 = \zeta_p -1$ and 
$\zeta^{pn}-1 = \zeta_q - 1$, hence a common divisor of $p = N(\zeta_p-1)$ 
and $q = N(\zeta_q-1)$, and thus a unit.
\end{proof}

Dedekind uses Lemma \ref{LD1} to give the following

\begin{proof}[Proof of Lemma \ref{Lin}]
Let $f \ge 1$ be the minimal integer with $\zeta^f \in \ker \pi$,
that is, with $\zeta^f-1 \in \fp$. Then $\zeta^f$ is a primitive
$n$-th root of unity for some $n \mid m$, and there are several cases:
\begin{enumerate}
\item $n$ is divisible by two distinct primes; then the prime ideal
      $\fp$ contains the unit $\zeta^f-1$: contradiction. 
\item $n$ is a power of a prime $p$; then $\zeta^f-1 \in \fp$, where
      $\fp$ is a prime ideal above $p \mid m$ contradicting our 
      assumption.  
\item $n = 1$: then $f = m$, hence $\ker \pi = 1$, and this is 
      what we wanted to prove. 
\end{enumerate}
\end{proof}

\subsection*{An Irreducibility Proof based on Lemma \ref{LD1}}
The irreducibility of the cyclotomic polynomial $\Phi_m(X)$ for
prime powers $m$ follows from part (2) of Lemma \ref{LD1}:
the equation $p = \eps^{-1}(\zeta-1)^{\phi(m)}$ is valid in $L$, 
but since $p$ and $\zeta$ are in $K$, so is $\eps$; thus $(p)$ 
is the $\phi(m)$-th power of the ideal $(\zeta-1)$ in $K$, and we 
must have $(K:\Q) \ge \phi(m)$.

This can easily be extended to a proof of the irreducibility of
$\Phi_m(X)$ in the general case: we have to show that 
$(\Q(\zeta_m):\Q) \ge \phi(m)$. The idea of the proof becomes
clear enough by treating the case where $m = PQ$ is a product 
of two prime powers $P$ and $Q$.  Then $\Q(\zeta_m)$ is the 
compositum of the fields $\Q(\zeta_P)$ and  $\Q(\zeta_Q)$,
which have degrees $\phi(P)$ and $\phi(Q)$, respectively. Since
the fields $\Q(\zeta_P)$ and $\Q(\zeta_Q)$ are independent
($p$ is fully ramified in $\Q(\zeta_P)$ and unramified in 
$\Q(\zeta_Q)$), we must have 
$$ (\Q(\zeta_m):\Q) = (\Q(\zeta_P):\Q)(\Q(\zeta_Q):\Q)
      = \phi(P) \phi(Q) = \phi(m). $$

The fact that Dedekind chose not to give this proof does not mean
that he did not see it: Dedekind avoided artificial tricks and
shortcuts whenever he thought that this would make the proof
less conceptional. In the case of the irreducibility of $\Phi_m(X)$
he also wanted to avoid arguments that do not carry over to the
theory of complex multiplication: there one had to prove the
irreducibility of certain polynomials (coming from the theory
of elliptic curves and modular functions) whose roots generate
abelian extensions of complex quadratic number fields.
 
\section{The Proof of Artin's Reciprocity Law.}

The proof of Artin's reciprocity law is quite involved; the first 
(essential) step is verifying the reciprocity law for cyclotomic 
extensions of $\Q$; this step (see e.g. Artin \& Tate 
\cite[Lemma 1, p. 42]{AT}) is nothing but Dedekind's proof of the
irreducibility of the cyclotomic polynomial discussed above: 

\begin{lem}
Let $\zeta$ be a primitive $m$-th root of unity. An automorphism
of $\Q(\zeta)/\Q$ sends $\zeta$ into a power $\zeta^n$ where $n$
is coprime to $m$. Conversely, to any given $n$ prime to $m$ there
is an automorphism $\sigma$ such that $\zeta^\sigma = \zeta^n$.
\end{lem}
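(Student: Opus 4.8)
The plan is to observe that the two halves of the statement are simply the well-definedness and the surjectivity of the natural homomorphism $\phi\colon\Gal(\Q(\zeta)/\Q)\to(\Z/m\Z)^\times$, $\sigma\mapsto(n\bmod m)$ where $\sigma(\zeta)=\zeta^n$; thus the lemma is just a reformulation of Proposition \ref{PD1} and needs nothing beyond it.

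For the first assertion, let $\sigma\in\Gal(\Q(\zeta)/\Q)$. From $\sigma(\zeta)^m=\sigma(\zeta^m)=1$ we get $\sigma(\zeta)\in\mu_m$, say $\sigma(\zeta)=\zeta^n$. Being a field automorphism, $\sigma$ maps $\mu_m$ into itself multiplicatively and injectively, hence bijectively since $\mu_m$ is finite; so it restricts to an automorphism of the cyclic group $\mu_m\cong\Z/m\Z$, and every such automorphism is multiplication by a unit. Therefore $\gcd(n,m)=1$. (Equivalently, $\zeta=\sigma^{-1}(\zeta^n)$ has order $m$, which forces $\zeta^n$ to have order $m$.)

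The converse is the substantive half, and it is exactly Proposition \ref{PD1}: $\phi$ is surjective, so to each $n$ coprime to $m$ there is a $\sigma$ with $\phi(\sigma)=n$, i.e.\ $\zeta^\sigma=\zeta^n$. Unwinding the proof, one first reduces --- as already in \cite{Ded1} --- to the classes represented by primes $p\nmid m$, since these generate $(\Z/m\Z)^\times$; for such a $p$ one takes the Frobenius automorphism $\sigma$ of $p$, which by Lemma \ref{LFr} satisfies $\sigma(\zeta)\equiv\zeta^p\bmod\fp$ for a prime $\fp$ above $p$. Writing $\sigma(\zeta)=\zeta^s$ and passing to a prime $\fP$ of $L=\Q(\mu_m)$ over $\fp$, the congruence $\zeta^s\equiv\zeta^p\bmod\fP$ together with Lemma \ref{Lin} --- applicable because $p\nmid m$ makes $\fP$ coprime to $m$ --- forces $s\equiv p\bmod m$; a general $n$ is then realized as a product of such Frobenius automorphisms. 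There is therefore no genuine obstacle, the only non-bookkeeping input being Proposition \ref{PD1}. What deserves emphasis is that this converse \emph{is} Dedekind's irreducibility argument: combining surjectivity of $\phi$ with $|\Gal(\Q(\zeta)/\Q)|=(\Q(\zeta):\Q)$ gives $(\Q(\zeta):\Q)\ge\phi(m)$, which by Lemma \ref{Lirr} means that $\Phi_m(X)$ is irreducible.
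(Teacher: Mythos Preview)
Your argument is correct, but it is not the proof the paper actually gives for \emph{this} lemma. You invoke Proposition~\ref{PD1} directly and unwind it via the global Frobenius of Lemma~\ref{LFr} together with Lemma~\ref{Lin}; that is Dedekind's global route. The paper's proof here is the Artin--Tate local version: one passes to $\Q_p(\zeta)/\Q_p$, checks that this extension is unramified because $f'(\zeta)=m\zeta^{m-1}$ is a $p$-unit (so the Frobenius substitution exists locally), and replaces the appeal to Lemma~\ref{Lin} by the observation that $f'(\zeta)=\prod_{\mu\ne 1}(\zeta-\zeta^\mu)$ being coprime to $p$ forces distinct $m$-th roots of unity into distinct residue classes; the local automorphism is then restricted from a global one. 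The paper itself remarks that the two arguments coincide in substance --- the derivative trick is exactly Lemma~\ref{Lin} in compressed form --- so the difference is one of packaging: your version stays entirely in the global field and quotes the earlier proposition, while the paper's version rephrases everything $p$-adically and avoids citing Lemmas~\ref{Lin} and~\ref{LFr} explicitly.
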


\begin{proof}
The first part of the lemma is trivial. As for the second part,
it suffices to prove the statement if $n$ is a prime $p$ that 
does not divide $m$. Recall that $\zeta$ satisfies the equation 
$x^m - 1 = f(x) = 0$, and $f'(\zeta) = m\zeta^{m-1}$ is prime 
to $p$. The local field $\Q_p(\zeta)/\Q_p$ is therefore 
unramified. The Frobenius substitution sends $\zeta$ into an
$m$-th root of unity that is congruent to $\zeta^p$. Since 
$f'(\zeta) = \prod_\mu (\zeta- \zeta^\mu)$ is prime to $p$ it 
follows that no two $m$-th roots of unity are in the same
residue class; $\zeta^p$ is therefore the image of $\zeta$
under the Frobenius substitution. This automorphism of the 
local field is induced by an automorphism of the global field
$\Q(\zeta)/\Q$ and this proves the lemma.
\end{proof}

In this proof, Dedekind's Lemma \ref{Lin} is condensed into the 
observation that, for $f(x) = x^m-1$, the element 
$f'(\zeta) = \prod_{r \ne 1} (\zeta - \zeta^r)$ is coprime to $p$.

\subsection*{Emmy Noether's Comments}
The fact that Dedekind's proof of the irreducibility of the 
cyclotomic equation is connected with Artin's reciprocity law
was first noticed by Emmy Noether. In her comments on Dedekind's
proof in his Collected Works she writes:
\begin{quote}
The proof of the irreducibility of the cyclotomic equation given here
is based on the following two facts:
\begin{enumerate}
\item A primitive $m$-th root of unity remains primitive modulo
      each prime ideal $\fp$ not dividing $m$ in the field of 
      $m$-th roots of unity.
\item There is a substitution $\psi_0$ of the decomposition
      group of $\fp$ for which $\psi_0(\omega) \equiv \omega^p \bmod \fp$
      for every integral $\omega$ in $K$.
\end{enumerate} 
The map given in (2) from the Galois group and the class group -- from 
which irreducibility follows immediately -- is the map given by Artin's
reciprocity law, but in a very weak and therefore elementary form.
\end{quote}

In fact, Prop. \ref{PD1} gives an isomorphism
$\Gal(K/\Q) \simeq (\Z/m\Z)^\times$. Composed with the isomorphism
$(\Z/m\Z)^\times \simeq \Cl_\Q\{m\infty\}$ this gives a canonical 
isomorphism between $\Cl_\Q\{m\infty\}$ and $\Gal(K/\Q)$.

Finally let us remark that the ``small piece of Artin's Reciprocity
Law for cyclotomic extensions'', which, according to \cite[p. 54]{FJ}, 
is contained in Chebotarev's article \cite[p. 200]{Cheb}, is based on 
the irreducibility of the cyclotomic equation, which Tschebotareff 
takes for granted.

\bigskip

\end{document}